\newcommand{\Z}{\mathbb{Z}}
\newcommand{\N}{\mathbb{N}}
\newcommand{\Q}{\mathbb{Q}}
\newtheorem{theoremfoo}{Theorem}[section] 
\newenvironment{theorem}{\pagebreak[1]\begin{theoremfoo}}{\end{theoremfoo}}
\newenvironment{repeatedtheorem}[1]{\vskip 6pt
\noindent
{\bf Theorem #1}\ \em
}{}
\newtheorem{dfntn}[theoremfoo]{Def}
\newenvironment{definition}{\pagebreak[1]\begin{dfntn}\rm}{\end{dfntn}}
\newtheorem{examfoo}[theoremfoo]{Example}
\newenvironment{example}{\pagebreak[1]\begin{examfoo}\rm}{\end{examfoo}}
\newtheorem{commentfoo}[theoremfoo]{Comment}
\newtheorem{corollaryfoo}[theoremfoo]{Corollary}
\newenvironment{corollary}{\pagebreak[1]\begin{corollaryfoo}}{\end{corollaryfoo}}
\newtheorem{lemmafoo}[theoremfoo]{Lemma}
\newenvironment{lemma}{\pagebreak[1]\begin{lemmafoo}}{\end{lemmafoo}}
\newtheorem{notefoo}[theoremfoo]{Note}
\newenvironment{note}{\pagebreak[1]\begin{notefoo}\rm}{\end{notefoo}}
\newtheorem{nttn}[theoremfoo]{Notation}
\newenvironment{notation}{\pagebreak[1]\begin{nttn}\rm}{\end{nttn}}
\begin{document}

\title{An additive version of Ramsey's theorem}
\author{Andy Parrish}
\maketitle

\begin{abstract}
We show that, for every $r, k$, there is an $n = n(r,k)$ so that any
$r$-coloring of the edges of the complete graph on $[n]$
will yield a monochromatic complete subgraph on vertices
$\left\{ a + \sum_{i \in I} d_i \mid I \subseteq [k]\right \}$
for some choice of $a, d_1, \ldots, d_k$.
In particular, there is always a solution to
$x_1 + \ldots + x_\ell = y_1 + \ldots + y_\ell$
whose induced subgraph is monochromatic.
\end{abstract}

\section{Introduction}
Given a set $X$ and a number $r$, an $r$-coloring of $X$ is any map
$\chi:X \rightarrow [r]$, where $[r] = \{1, \ldots, r\}$ is the set
of colors.

Ramsey's celebrated theorem \cite{ramsey} states that, given $r, k$,
there is an $R=R(r,k)$ so that any $r$-coloring of the edges of the
complete graph on $R$ vertices contains a monochromatic complete graph
on $k$ vertices. In addition to being an important result in itself,
Ramsey is the namesake of a large field of research into Ramsey Theory,
which more generally tells when a coloring of a large structure is
guaranteed to have large monochromatic substructures. There are many
great resources on Ramsey Theory, but the main source is due to Graham,
Rothschild, and Spencer \cite{GRS}.

The first result in Ramsey theory was actually proved by Hilbert in 1892,
predating Ramsey's theorem (1930) by several decades. Given natural numbers
$a, d_1, \ldots, d_k$, define
\[
H(a; d_1, \ldots, d_k) =
  \left\{ a + \sum_{i \in I} d_i \mid I \subseteq [k] \right\}.
\]
We call such a set $H(a; d_1, \ldots, d_k)$ a Hilbert cube of dimension $k$.
Hilbert proved \cite{hilbert} that, given $r, k$ natural numbers, there is a
number $H = H(r,k)$ so that any $r$-coloring of $[H]$ contains a
monochromatic Hilbert cube of dimension $k$.

It was further shown that finite-colorings of natural numbers would
always contain monochromatic solutions to $x+y=z$ (Schur \cite{schur}),
as well as long monochromatic arithmetic progressions (van der Waerden
\cite{vdw}). The holy grail of results of this type is Rado's theorem
\cite{rado}, which characterizes which systems of linear equations have
monochromatic solutions under every finite-coloring of the naturals.
Those which do are called {\it partition-regular}.

These results are philosophically related to Ramsey's theorem, but
the graph theoretic and additive sides of Ramsey theory are largely
distinct fields. In recent years, however, Deuber, Gunderson, Hindman,
and Strauss proved a connecting result \cite{gunder1} --- for any $m$,
any sufficiently large graph either contains a $K_{m,m}$, or else it
has an independent set with a prescribed additive structure. Later,
Gunderson, Leader, Pr\"{o}mel, and R\"{o}dl showed \cite{gunder2} that
for any $m, k$, large graphs must either contain a $K_m$ or there must
be an arithmetic progression of length $k$ which is an independent set.
There have been further results in this area, but none give a purely
additive result.

Our goal is to find some additive property $\mathcal{P}$ for which we
can guarantee that every finite edge-coloring of the complete graph
on $[n]$ will contain a set of vertices with property $\mathcal{P}$
whose induced subgraph is monochromatic. In this paper, we consider
properties where $X$ has property $\mathcal{P}$ if $X$ satisfies a
particular system of linear equations.

We always demand solutions by distinct values, so that the monochromatic
subgraphs are non-trivial. For example, if we solve $x+y=z$ by
$x=y=3, z=6$, the corresponding graph has only a single edge, $\{3,6\}$.
The induced graph has no choice but to be monochromatic.

Formally, given a matrix $B$ and a number of colors $r$, we would like
to know whether there is an $n = n(r,B)$ so that any $r$-edge-coloring
of the complete graph on $[n]$ gives a vector
$\vec{x} = (x_1, \ldots, x_k)$ of distinct entries so that the values
$\{x_1, \ldots, x_k\}$ are monochromatic, and $B \vec{x} = \vec{0}$.

We consider this problem for many systems of equations known to be
partition-regular. In Section~\ref{se:negatives}, we give several
negative results. In Section~\ref{se:twocolors} we give an initial
positive result: there is an $n$ so that, any 2-coloring of the edges
of the complete graph on $[n]$ gives a monochromatic 2-dimensional
Hilbert cube. In Section~\ref{se:trees}, we prove a lemma about
coloring $k$-ary trees which may be interesting on its own.
In Section~\ref{se:full} we extend our initial result to any
number of colors and to Hilbert cubes of any size. We believe these
are the first explicit\footnote
{In personal communication, David Conlon noted that our main result in fact
follows from the Graham-Rothschild theorem on $n$-parameter sets \cite{GR}.
The proof in this paper is preferred as it is more easily expanded to
similar results.}
positive results in this direction.

\section{Negative results}\label{se:negatives}
There are many families of equations for which monochromatic solutions
can be easily avoided in this graph setting.

\subsection{Arithmetic progressions}

Van der Waerden's theorem \cite{vdw} tells us that any finite-coloring of
the naturals have arbitrarily long monochromatic arithmetic progressions.
What can we say when coloring pairs of naturals? An arithmetic progression
of length 3 is given by $a, a+d, a+2d$. We notice that the triple contains
two differences: $d$ and $2d$. This observation allows us to 2-color
the complete graph on the naturals without a monochromatic 3-AP.

The coloring is simple. For a pair $\{x, y\}$, write $|x-y| = 2^p q$
where $p, q$ are integers and $q$ is odd. If $p$ is even, color $\{x,y\}$
red. Otherwise, color it blue.

Now let $a, a+d, a+2d$ be a 3-AP. Write $d = 2^p q$. Then we see
$2d = 2^{p+1} q$, so the edges $\{a,a+d\}$ and $\{a,a+2d\}$ have
different colors.

This coloring avoids 3-APs, so we certainly cannot hope for anything longer.

\subsection{Schur's equation and generalizations}

Schur's theorem \cite{schur} states that any finite-coloring of the
naturals has a mono-chromatic solution to $x+y=z$. Additionally, it follows from
Folkman's theorem that there is a monochromatic solution to
$x_1 + \ldots + x_k = z$ for arbitrary $k$.

More generally, we consider equations of the form
\begin{equation}\label{eq:schur}
a_1 x_1 + \ldots + a_k x_k = b z
\end{equation}
with $a_1, \ldots, a_k \ge b > 0$.

We note that any solution to Equation~\ref{eq:schur} has $x_i \le z$ for
$i = 1, \ldots, k$. Using two colors, we can ensure that every graph
induced by a solution to an equation of this form in the natural numbers
contains both colors. We first show how to avoid $x+y=z$ as motivation for
the approach, and then handle the general case.

If $x+y=z$ then either $x$ or $y$ is smaller than their average,
$\frac{1}{2}z$, and the other must be larger than their average.
Thus, given a pair $\{u,v\}$ with $u < v$, we color it red
if $u \le \frac{1}{2}v$, and blue if $u > \frac{1}{2}v$.
Now we see that whenever $x+y=z$, the largest of the three numbers must
be $z$. Either $x$ or $y$ is smaller than $\frac{1}{2}z$, and the other
is larger, so the pairs $\{x,z\}$ and $\{y,z\}$ have different colors.
(Recall that we are only interested in solutions by distinct numbers).

In Equation~\ref{eq:schur}, a similar logic applies. We see that
$a_i x_i \le bz$. Since $a_i \ge b > 0$, we get $x_i \le z$ as before.
Let $M = a_1 + \ldots + a_k$. Divide both sides of the equation by $M$
to get
\[
\frac{a_1}{M} x_1 + \ldots + \frac{a_k}{M} x_k = \frac{b}{M} z.
\]
This says that the weighted average of the $x_i$'s is $\frac{b}{M} z$.
Again, one of the $x_i$'s must be smaller than their average, and another
must be larger. Thus, when $u < v$, we should color $\{u,v\}$ red if
$u \le \frac{b}{M}v$, and blue otherwise. We immediately see that one
of the pairs $\{x_i, z\}$ must be red and another must be blue.

\vspace{1pc}
\noindent
{\bf Remark:}

The argument given above is really a greedy coloring. At step $t$, color
the pairs $\{1,t\}, \ldots, \{t-1,t\}$ in a way that handles those solutions
to Equation~\ref{eq:schur} with largest element $t$. Since we can manage all
these solutions at once, we avoid all monochromatic solutions. The incredible
thing to notice here is that this coloring is much stronger than needed. If
$x_1, \ldots, x_k, z$ satisfy Equation~\ref{eq:schur}, then the star
connecting $z$ to all of the $x_i$'s is not even monochromatic. Forget about
the clique! The strength of this technique suggests that we may be able to
handle a larger family of equations.

On the other hand, this technique relies heavily on the numbers being
positive. If we change the underlying set to $\Z$ or $\Z_p$, the
approach falls apart.

\subsection{Three variables, six colors}

As with many problems in Ramsey theory, we may consider our conjecture as a
hypergraph coloring problem. The vertex set is all pairs we are considering
(be they pairs in $[n], \N, \Z, \Z_n$, etc). For each solution
$(x_1, \ldots, x_k)$ to $b_1 x_1 + \ldots + b_k x_k = 0$, there is a
hyperedge containing all pairs of the $x_i$'s. If we properly
color this $\binom{k}{2}$-uniform hypergraph (avoiding monochromatic
hyperedges), then there are no monochromatic solutions to the equation.
Thus we may apply theorems about hypergraph coloring.

For an equation in three variables, this hypergraph is {\it simple} --- any
two pairs are either disjoint (and have no hyperedges in common), or have
the form $\{x,y\}, \{x,z\}$, leaving only $\{y,z\}$ to form a hyperedge.

Fix $a,b,c$, and consider the hypergraph formed as above by the equation
\begin{equation}\label{eq:three}
ax+by+cz=0.
\end{equation}

Consider a pair $\{u,v\}$. How many hyperedges can it be contained in? Well,
there are 6 different ways of assigning the values $u$ and $v$ to the
variables in Equation~\ref{eq:three}:
\[
\begin{array}{rcl}
au+bv+cz=0 & \implies & z = -\frac{au+bv}{c} \\
av+bu+cz=0 & \implies & z = -\frac{av+bu}{c} \\
au+by+cv=0 & \implies & y = -\frac{au+cv}{b} \\
av+by+cu=0 & \implies & y = -\frac{av+cu}{b} \\
ax+bu+cv=0 & \implies & x = -\frac{bu+cv}{a} \\
ax+bv+cu=0 & \implies & x = -\frac{bv+cu}{a} \\
\end{array}
\]
Thus we see that, so long as the numbers $a, b, c$ are all invertible, each
pair $\{u,v\}$ is contained in at most 6 hyperedges. In particular, if we
are in $\Z, \Q$, or $\Z_p$ for a prime $p$, then the degree is at most 6.
The hypergraph version of Brooks' theorem \cite{brooks} applies.

\begin{theorem}
If $H$ is a hypergraph with maximum degree $\Delta$, then
$\chi(H) \le \Delta$ except in these cases:
\begin{enumerate}
\item $\Delta = 1$,
\item $\Delta = 2$ and $H$ contains an odd cycle (an ordinary graph),
\item $H$ contains a $K_{\Delta}$ (an ordinary graph).
\end{enumerate}
\end{theorem}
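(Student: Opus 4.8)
The plan is to mirror the classical proof of Brooks' theorem, adapting the greedy-coloring argument to the weaker notion of hypergraph coloring. The central observation is this: if we color the vertices of $H$ one at a time in some order $v_1, \ldots, v_n$, then when we reach $v_i$ a hyperedge $e \ni v_i$ can forbid a color only if every other vertex of $e$ has already been colored and they all received the same color, in which case $e$ forbids exactly that one color. Thus the number of colors forbidden at $v_i$ is at most the number of hyperedges through $v_i$ all of whose remaining vertices precede $v_i$. If for every $i$ this count is at most $\Delta - 1$, then one of $\Delta$ colors is always available and greedy coloring succeeds, giving $\chi(H) \le \Delta$. The whole problem therefore reduces to producing a good vertex ordering.

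First I would dispose of the degenerate regimes. If $\Delta = 1$ then a single hyperedge of size $\ge 2$ cannot be made non-monochromatic with one color, which is exactly exclusion (1); and the graphs of maximum degree $2$ reduce to disjoint paths and cycles, the odd cycles being exclusion (2). I would also reduce to the case that $H$ is connected, in the sense that its vertices cannot be split into two parts sharing no hyperedge, since $\chi(H)$ is the maximum of $\chi$ over the connected components and any component forcing more than $\Delta$ colors must itself be one of the exceptional configurations, which $H$ then contains.

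Next, the non-regular case. If some vertex $w$ lies in fewer than $\Delta$ hyperedges, I would run a breadth-first search of $H$ rooted at $w$ and order the vertices by decreasing distance from $w$, so that $w$ comes last and every other vertex is reached along a hyperedge containing a vertex that comes later. Then each $v_i$ with $i < n$ has at least one hyperedge pointing ``forward,'' so at most $\Delta - 1$ of its hyperedges lie entirely behind it; and $w$ itself lies in at most $\Delta - 1$ hyperedges by assumption. Greedy coloring in this order then uses at most $\Delta$ colors.

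The hard part is the regular case, where every vertex lies in exactly $\Delta$ hyperedges and the forward-edge trick has no slack. Here I would adapt Brooks' device: locate two vertices $u_1, u_2$ lying in a common hyperedge with a vertex $w$, chosen so that $u_1$ and $u_2$ are otherwise separated and $H - \{u_1, u_2\}$ stays connected, color $u_1$ and $u_2$ first with a single common color, and then order the remaining vertices by reverse breadth-first search terminating at $w$. The two equally-colored vertices collapse the constraint coming from their shared hyperedge, leaving $w$ with at most $\Delta - 1$ effectively forbidden colors while every intermediate vertex still enjoys a forward hyperedge. The obstacle, and the step I expect to demand the most care, is proving that such a reducible triple $(u_1, u_2, w)$ can always be found unless $H$ is precisely one of the forbidden configurations, namely a complete ordinary graph or, for $\Delta = 2$, an odd cycle. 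In particular one must verify that hyperedges of size $\ge 3$ only ever help the construction and never create fresh obstructions, so that the exceptions are exactly those already known from the ordinary-graph theorem.
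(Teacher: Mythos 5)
The paper does not actually prove this statement --- it is imported verbatim from Kostochka, Stiebitz, and Wirth \cite{brooks} and used as a black box --- so there is no in-paper argument to compare yours against; what follows is an assessment of your outline on its own terms. Your central observation is correct and is the right starting point: under the weak notion of proper hypergraph coloring, a hyperedge $e \ni v$ forbids at most one color at $v$, and only when every other vertex of $e$ is already colored and monochromatic. Your treatment of the non-regular connected case (reverse breadth-first order from a vertex of degree less than $\Delta$) is also sound.

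The gap is the regular case, which is the entire content of any Brooks-type theorem, and you explicitly leave it open. Moreover, the device you propose does not transfer as stated. In the graph proof, $u_1$ and $u_2$ are neighbors of $w$ along two \emph{distinct} edges; giving them a common color forces those two edges to forbid the same single color at $w$, dropping the count to $\Delta-1$. If instead $u_1, u_2, w$ all lie in one common hyperedge $e$, as your phrasing suggests, coloring $u_1$ and $u_2$ alike gains nothing: $e$ still forbids one color at $w$ precisely when $e\setminus\{w\}$ is monochromatic, and the other $\Delta-1$ hyperedges at $w$ are unaffected, so $\Delta$ colors can still be forbidden. What you need is $u_1\in e_1$ and $u_2\in e_2$ with $e_1\neq e_2$ both containing $w$, no hyperedge equal to $\{u_1,u_2\}$, and $H-\{u_1,u_2\}$ connected; proving that such a configuration exists in every connected $\Delta$-regular hypergraph other than the exceptional ones --- and that hyperedges of size at least $3$ genuinely ``only help'' --- \emph{is} the theorem, and it is absent. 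The $\Delta=2$ case has the same problem in miniature: a degree-$2$ hypergraph with edges of size at least $3$ is not a disjoint union of paths and cycles, so the claimed reduction there also needs an argument. As it stands, the proposal is a correct frame around a missing core.
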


Since all of these cases are irrelevant --- ours is a 3-uniform hypergraph,
and we don't have any illusions that we can 1-color it --- this tells us we
can properly 6-color our hypergraph. By construction, this avoids
monochromatic solutions to Equation~\ref{eq:three}.

Moreover, if for example $a=b$, then the six solutions reduce to three
distinguishable ones, meaning 3 colors is enough.

\begin{note}
We avoided considering solutions over $\Z_n$ with $n$ composite and
$a, b, c$ not necessarily invertible. Taken to extremes, this case is
quite degenerate. Consider, for example, $n=2^r$, and $a=b=2^{r-1}$.
Any collection of even numbers then solves Equation~\ref{eq:three}.
The problem of finding a solution which induces a monochromatic subgraph
now reduces to the multicolor Ramsey's theorem for triangles.
\end{note}

\section{Two colors, two dimensions}\label{se:twocolors}

We will eventually prove the following result:

\begin{theorem}\label{th:hilbert}
For all $r,k$, there is a number $n = n(r,k)$ so that any $r$-coloring
of the edges of the complete graph on $[n]$ gives a Hilbert cube
$H = H(a; d_1, \ldots, d_k)$ so that all edges in $H$ are the same color,
and the $2^k$ elements of $H$ are distinct.
\end{theorem}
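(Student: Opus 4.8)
The plan is to prove Theorem~\ref{th:hilbert} by induction on the dimension $k$, using the tree-coloring lemma of Section~\ref{se:trees} to drive the inductive step. First I would dispose of the distinctness requirement by only ever searching for cubes whose difference parameters grow rapidly: if I insist that each $d_{i+1}$ exceeds $\sum_{j \le i} d_j$ (a super-increasing condition), then all the subset sums $\sum_{i \in I} d_i$ are distinct, so the $2^k$ vertices $a + \sum_{i \in I} d_i$ are automatically distinct. This lets me concentrate entirely on forcing monochromaticity. The base case $k=1$ is immediate: $H(a;d_1) = \{a, a+d_1\}$ is a single edge, so $n(r,1)=2$ works regardless of the coloring.

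For the inductive step the central observation is structural: a $(k+1)$-cube $H(a; d_1, \ldots, d_{k+1})$ decomposes as two parallel $k$-dimensional faces, namely
\[
F_0 = H(a; d_1, \ldots, d_k), \qquad F_1 = F_0 + d_{k+1},
\]
together with the bipartite family of \emph{cross-edges} joining a vertex of $F_0$ to a vertex of $F_1$. Rather than trying to find one good translate directly, I would look for a large reservoir of candidate translates $F_0 + t$ that are simultaneously monochromatic of the same color as $F_0$ and whose cross-edges back to $F_0$ all carry that color; choosing any one such $t$ as $d_{k+1}$ then completes a monochromatic $(k+1)$-cube.

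The mechanism for producing this reservoir, and the place where the tree lemma enters, is to encode the greedy construction of the parameters $(a; d_1, \ldots, d_k)$ as a coloring of a high-arity tree. A node at depth $i$ represents a partial choice $(a; d_1, \ldots, d_i)$, its children range over the admissible next differences $d_{i+1}$ drawn from a large arithmetic reservoir, and the color attached to the corresponding tree-edge records the (finitely many) colors of all graph-edges newly created when the dimension is raised from $i$ to $i+1$. The edge-coloring of $[n]$ thus pushes forward to a bounded coloring of the tree, and a monochromatic root-to-leaf configuration guaranteed by the lemma of Section~\ref{se:trees} unwinds into a choice of $(a; d_1, \ldots, d_k)$ for which every edge of the resulting cube received the same color.

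The hard part will be the bookkeeping at the inductive step: when the dimension is raised from $k$ to $k+1$ there are $2^k \cdot 2^k$ types of cross-edge to control at once, and their colors depend on the shift $t$ in a way that interacts with the colors already fixed on $F_0$. Forcing all of these to agree simultaneously --- rather than one pair of faces at a time --- is exactly what the tree formulation is designed to absorb, by handling every branch of the construction in a single application of the lemma rather than iterating pigeonhole and losing control of the reservoir. Verifying that the induced tree-coloring really does use only boundedly many colors (so that the tree lemma applies with parameters depending only on $r$ and $k$), and that the super-increasing condition can be maintained throughout the nested reservoirs, are the technical points I expect to demand the most care.
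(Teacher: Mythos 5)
Your proposal reuses two genuine ingredients of the paper (the tree lemma of Section~\ref{se:trees}, and a trick for forcing distinctness --- your super-increasing condition on the $d_i$ is a perfectly good substitute for the paper's device of restricting to a top-left quadrant), but it omits the combinatorial engine that actually produces monochromatic cross-edges, and the step where you claim to extract the cube from the tree lemma does not do what you need. The central difficulty of this theorem is that the cross-edges between the face $F_0$ and a translate $F_0+t$ are $4^k$ \emph{graph} edges whose colors the adversary controls independently of the colors inside $F_0$. Your plan is to fix $F_0$ and then hunt for a ``reservoir'' of translates $t$ with all cross-edges the right color, but no such reservoir need exist for a fixed $F_0$: nothing prevents the coloring from making every translate of a given face fail. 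The choice of $F_0$ and of the translate must be made \emph{simultaneously}, and the tool the paper uses for this is the Gallai--Witt theorem applied to the grid of ordered pairs $(x,y)$, $x<y$: a monochromatic square subgrid in position $(x,y)$ with scale $d$ hands you, in one stroke, a large family of candidate faces near $x$, a matching family near $y$, and the guarantee that \emph{all} cross-edges between them already carry one fixed color. Your proposal contains no substitute for this step, and the tree lemma cannot supply one --- it is a pigeonhole statement about node-colorings of trees and has no access to the bipartite edge structure between two blocks of integers.

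The second gap is in how you read off the cube from the tree. You color a tree-edge by the tuple of colors of the graph-edges created when the dimension goes from $i$ to $i+1$, and then invoke a monochromatic configuration. But ``monochromatic'' under such a coloring means only that these \emph{tuples agree} across the configuration; it does not make any single tuple constant, so the new edges at a given step may still use several colors and the resulting cube is not monochromatic. (Worse, the tuples at different depths have different lengths --- going from $2^i$ to $2^{i+1}$ vertices creates $\binom{2^{i+1}}{2}-\binom{2^i}{2}$ new edges --- so no two levels can even share a color, and the lemma returns nothing.) In the paper the tree lemma is applied to a coloring in which each node is a \emph{single} color, namely the color of an already-monochromatic grid produced by an iterated application of Gallai--Witt; a monochromatic embedded subtree then genuinely means that all the relevant grids, and hence all edges of the cube, share one color. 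To repair your argument you would need to import Gallai--Witt (or an equivalent multidimensional partition theorem on pairs) to manufacture the monochromatic grids first, at which point you would essentially be reconstructing the paper's proof.
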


We first prove the theorem for $r=k=2$. Note that a 2-dimensional
Hilbert cube is four numbers of the form $a, a+b, a+c, a+b+c$.
We will then extend those ideas to any number of colors, and then
to Hilbert cubes of any dimension.

The proof will rely on the Gallai-Witt theorem \cite{gallai-witt}, and a
consequence of Rado's theorem \cite{rado}, both of which we state here.

\begin{theorem}[Gallai-Witt]
For all $r, k$, there exists $GW=GW(r,k)$ so that any $r$-coloring of
$[GW] \times [GW]$ gives numbers $x, y, d$ with the property that
\[
\left\{(x+id, y+jd) \mid i, j = 0, \ldots, k-1 \right\}
\]
are all the same color.
\end{theorem}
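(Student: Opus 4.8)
The plan is to deduce this two-dimensional statement from the Hales--Jewett theorem, treating it as the combinatorial engine (to be cited or proved separately); the result is exactly the two-dimensional case of the multidimensional van der Waerden theorem. Recall that Hales--Jewett guarantees, for every number of colors $r$ and every alphabet size $m$, a dimension $N = HJ(r,m)$ such that every $r$-coloring of the combinatorial cube $[m]^N$ contains a monochromatic combinatorial line. The key idea is to take the alphabet to be the grid $S = \{0, 1, \ldots, k-1\}^2 \subseteq \Z^2$ itself, so that $m = k^2$.

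The device that transports the conclusion back to $\Z^2$ is the summation map $\phi : S^N \to \Z^2$ defined by $\phi(w) = \sum_{i=1}^N w_i$. Given an $r$-coloring $\chi$ of a large grid, I would pull it back to the coloring $\chi' = \chi \circ \phi$ of $S^N$ and apply Hales--Jewett with $N = HJ(r, k^2)$ to obtain a $\chi'$-monochromatic combinatorial line $L$. Such a line is determined by a nonempty set $A \subseteq [N]$ of active coordinates together with fixed values on the rest: its points are $w(s)$, for $s \in S$, where $w(s)_i = s$ when $i \in A$ and $w(s)_i = c_i$ for some constant $c_i \in S$ otherwise.

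Applying $\phi$ collapses the line to $\phi(w(s)) = |A|\, s + b$, where $b = \sum_{i \notin A} c_i \in \Z^2$ is fixed and $|A| \ge 1$. As $s = (i,j)$ ranges over $\{0,\ldots,k-1\}^2$, this traces out precisely the grid $\{(b_1 + |A| i,\ b_2 + |A| j) \mid i, j = 0, \ldots, k-1\}$, which is monochromatic under $\chi$ because $L$ was monochromatic under $\chi'$. Reading off $d = |A|$, $x = b_1$, and $y = b_2$ gives the required configuration, with the same difference $d$ in both coordinates; and since every coordinate of $\phi$ lies in $\{0, \ldots, N(k-1)\}$, a grid of side $GW = N(k-1) + 1$ (after the harmless shift from $\{0,\ldots\}$ to $[GW]$) contains the whole copy.

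The main obstacle is genuinely the proof of Hales--Jewett itself; the reduction above is routine once that statement is available, and it makes the nondegeneracy $d \ge 1$ transparent. If instead a self-contained argument were wanted, the alternative is a direct double induction, on the side length $k$ and on the dimension, via the color-focusing technique behind the usual proof of van der Waerden's theorem. There the delicate point is the bookkeeping: one maintains many partial monochromatic grids of distinct colors all \emph{focused} on a common point and argues that, among enough such configurations, two must share a color and thereby complete the full grid. I would favor the Hales--Jewett route, since it isolates all of the combinatorial difficulty in a single clean statement.
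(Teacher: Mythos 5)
Your argument is correct. Note first that the paper does not prove this statement at all: Gallai--Witt is quoted as a known black box, with a citation to Witt, and is then used as an ingredient in the proof of Theorem~\ref{th:hilbert}. So there is no ``paper proof'' to match; what you have supplied is the standard modern derivation of the two-dimensional Gallai theorem from Hales--Jewett, and it is complete modulo Hales--Jewett itself. The details check out: taking the alphabet to be $S=\{0,\ldots,k-1\}^2$ and pushing forward along the summation map $\phi(w)=\sum_i w_i$, a monochromatic combinatorial line with active set $A$ collapses to $\{|A|\,s+b \mid s\in S\}$, which is exactly a $k\times k$ grid with common difference $d=|A|\ge 1$ in both coordinates, and your bound $GW = HJ(r,k^2)\,(k-1)+1$ (after the shift to $[GW]$) is right. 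You also correctly flag the one point that could trip someone up, namely that nondegeneracy of the line forces $d\ge 1$. The alternative you sketch --- direct double induction by color focusing --- is the historically original route and avoids invoking Hales--Jewett, at the cost of substantially messier bookkeeping; for the purposes of this paper either would do, since only the existence of $GW(r,k)$ is ever used and the resulting bounds are Ackermann-type in any case.
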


\begin{theorem}[Corollary to Rado]\label{th:helper}
There is a number $T$ so that any 2-coloring of $[T]$ gives distinct
numbers $i, j, i+j, j-i$, all the same color.
\end{theorem}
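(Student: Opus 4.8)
The plan is to recognize the four numbers $i,\,j,\,i+j,\,j-i$ as an instance of the configuration in Brauer's theorem: a three-term arithmetic progression together with its common difference, all of one color. Writing $d=i$ and $a=j-i$, so that $j=a+d$ and $i+j=a+2d$, the requested set becomes
\[
\{\,i,\ j,\ i+j,\ j-i\,\}=\{\,d,\ a,\ a+d,\ a+2d\,\}.
\]
Thus asking for monochromatic $i,j,i+j,j-i$ is exactly asking for a monochromatic copy of $\{d,a,a+d,a+2d\}$, i.e. the length-three progression $a,a+d,a+2d$ whose common difference $d$ is itself the same color.

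First I would invoke Brauer's theorem, the standard named corollary of Rado's theorem (also derivable from van der Waerden's theorem by a product-coloring argument): for every $r$ and every $k$ there is an $N$ so that any $r$-coloring of $[N]$ contains $a$ and $d$ with $d,\,a,\,a+d,\,\ldots,\,a+(k-1)d$ all the same color. Applied with $r=2$ and $k=3$, this already produces, inside some $[T]$, a monochromatic quadruple $\{d,a,a+d,a+2d\}$, which by the identification above is a monochromatic $\{i,j,i+j,j-i\}$. If one prefers to stay closer to the matrix formulation used elsewhere in the paper, the same conclusion follows by verifying the columns condition for the system $w+x=y$, $x-w=z$ (with $w=i$, $x=j$, $y=i+j$, $z=j-i$) and appealing to Rado directly.

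The one genuine obstacle is the distinctness requirement, which the paper insists on so that the eventual monochromatic subgraph is non-trivial. Among positive values the only coincidence that can occur in $\{d,a,a+d,a+2d\}$ is $a=d$ (equivalently $j=2i$); every other equality would force $a=0$ or $d=0$. To rule this out I would instead apply Brauer with $k=4$, obtaining a monochromatic $\{d,a,a+d,a+2d,a+3d\}$, and then keep the shifted progression $a+d,\,a+2d,\,a+3d$, i.e. set $i=d$ and $j=a+2d$. The four numbers are then $d,\,a+d,\,a+2d,\,a+3d$, which are pairwise distinct whenever $a,d\ge 1$, and they are monochromatic because they all lie in the Brauer configuration. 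Taking $T=N(2,4)$ from Brauer's theorem completes the argument.
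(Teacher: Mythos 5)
Your argument is correct, but it takes a more concrete route than the paper does. The paper simply declares the statement a corollary of Rado's theorem and leaves the verification of the columns condition (and the distinctness of the solution) as an exercise; you instead observe that $\{i,j,i+j,j-i\}=\{d,a,a+d,a+2d\}$ under the substitution $d=i$, $a=j-i$, which is exactly the Brauer configuration of a $3$-term progression together with its common difference, and then derive everything from Brauer's theorem. Your handling of distinctness is the genuinely valuable part: you correctly identify $a=d$ as the only possible coincidence among positive $a,d$, and the fix --- apply Brauer with $k=4$ and pass to the shifted progression $a+d,a+2d,a+3d$ with the same difference $d$, so that the four values $d,a+d,a+2d,a+3d$ are automatically distinct --- is clean and complete. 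This is a point the paper's one-line remark glosses over, since the off-the-shelf statement of Rado's theorem does not by itself guarantee solutions in distinct integers. The only cosmetic quibble is your aside that Brauer follows from van der Waerden ``by a product-coloring argument''; the standard derivation is an induction on the number of colors (rescaling along the multiples of the common difference), but since you only use Brauer as a black box this does not affect the proof.
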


Note: Rado's theorem gives conditions for a system of linear equations
to have monochromatic solutions by distinct numbers. It is a simple
exercise to check that the above satisfies them.


\vspace{1pc}

\begin{proof}[Proof of Theorem~\ref{th:hilbert} when $r=k=2$]
Define $S = GW(T+1, 2)$, where $T$ comes from Theorem~\ref{th:helper}.
We will show that $n = 2S$ suffices.

Fix an $2$-coloring $\chi : \binom{[n]}{2} \rightarrow [2]$. We would
like to find a solution to $w+x=y+z$ which forms a monochromatic clique.
We view $\chi$ as a coloring of the upper half of the lattice
$[n] \times [n]$ --- for $x < y$, the color of $(x,y)$ is $\chi(\{x,y\})$.

Consider the top left quadrant of our grid:
$\{1, \ldots, S\} \times \{S+1, \ldots, 2S\}$.
Define $\chi' : [S] \times [S] \rightarrow [2]$ by
\[
\chi'(a, b) = \chi(a, S+b).
\]
Since $S = GW(T+1, 2)$, and $\chi'$ is a $2$-coloring of $[S] \times [S]$,
we may apply Gallai-Witt to find $x, y, d$ so that all points of the form
\[
\{ (x+id, y+jd) \mid i, j=0, \ldots, T \}
\]
are the same color, say red, under $\chi$. We will consider each subsquare
of this large grid.

For now, consider a red square given by the points
\[
(a,b) \quad (a+h, b) \quad (a, b+h) \quad (a+h, b+h).
\]
We may rewrite the underlying numbers as $a, a+h, a+(b-a), a+h+(b-a)$
to see they form a Hilbert cube of dimension 2.

There are six edges in the graph on these four numbers, and we know
that four of them are red. Thus, we only need to consider the edges
$\{a, a+h\}$ and $\{b, b+h\}$. If these are both red (and the four
values are distinct), then we have the desired monochromatic 4-clique.
Thus, either we have our goal, or every red square gives us two edges
which cannot both be red.


Well, we have a great many red squares. Each has corner $(x+id, y+jd)$
and side-length $\ell d$, for every choice of $i, j, \ell$ with
$i, j, i+\ell, j+\ell$ all in $\{0, \ldots, S\}$. The four underlying
numbers are all distinct by the choice of our initial grid
$\{1, \ldots, S\} \times \{S+1, \ldots, 2S\}$. The ``final'' edges of
this square are $\{x+id, x+(i+\ell)d\}$ and $\{y+jd, y+(j+\ell)d\}$,
so these two cannot both be red without reaching our goal.

All of our red squares will give us many interacting conditions, which we
record in a graph. Let $G = (A \cup B, E)$ be a bipartite graph, where
$A = B = \binom{ \{0, \ldots, T\} }{2}$. We say $\{a, a'\} \sim \{b, b'\}$
if $\{x+ad, x+a'd\}$ and $\{y+bd, y+b'd\}$ are the final edges of some
red square. There is an induced 2-coloring of both $A$ and $B$ --- namely
\[
\chi_A(\{i,j\}) = \chi(x+id, x+jd),
\]
\[
\chi_B(\{i,j\}) = \chi(y+id, y+jd)
\]

\begin{figure}[t]
  \centering
  \includegraphics[scale=0.5]{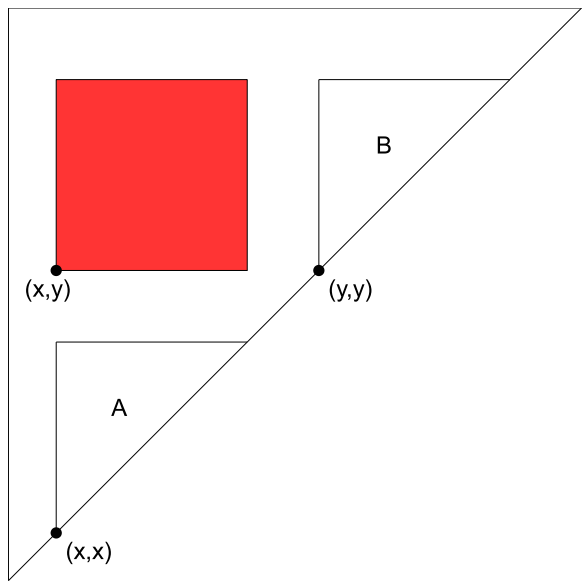}
  \caption{A large red grid, and the corresponding sets $A$ and $B$}
  \label{fig:firstgrid}
\end{figure}

We see immediately that $\{i, i+\ell\} \sim \{j, j+\ell\}$ so long as
those numbers are all in $\{0, \ldots, T\}$. This means that each pair
in $A$ with difference $\ell$ is connected to every pair in $B$ with
that difference. This means that if one pair in $A$ is red, all pairs
in $B$ with that difference must be blue (and vice versa). In fact,
this is the entire structure of $G$.

Write $A = A_1 \cup A_2 \cup \ldots \cup A_T$, where $A_{\ell}$
contains all pairs in $A$ of the form $\{i, i+\ell\}$. We now 2-color
$[T]$, the index set of the $A_{\ell}$'s. Say $\phi(\ell) = \hbox{red}$
if {\it any} pair in $A_{\ell}$ is red. Otherwise,
$\phi(\ell) = \hbox{blue}$, meaning that $A_{\ell}$ is entirely blue.
Since $\phi$ is a 2-coloring of $[T]$, Theorem~\ref{th:helper} tells
us there are distinct numbers $i, j, i+j, j-i$ which are monochromatic.

{\bf Case 1:} The numbers are red. This means each set
$A_i, A_j, A_{i+j}, A_{j-i}$ contains a red pair.
Therefore the corresponding sets in $B$, what we should call
$B_i, B_j, B_{i+j}, B_{j-i}$, are all entirely blue. The proof continues
as in case 2 below, but with all $A$'s changed to $B$'s, and all $x$'s
changed to $y$'s.

{\bf Case 2:} The numbers are blue, so all pairs in
$A_i, A_j, A_{i+j}, A_{j-i}$ are blue. We list the relevant blue pairs:
\[
\begin{array}{rl}
\hbox{In } A_i:     & \{0, i\}, \{j, i+j\} \\
\hbox{In } A_j:     & \{0, j\}, \{i, i+j\} \\
\hbox{In } A_{i+j}: & \{0, i+j\} \\
\hbox{In } A_{j-i}: & \{i, i+(j-i)\} = \{i, j\}.
\end{array}
\]
Taken together, we see that $0, i, j, i+j$ form a blue $K_4$ under $\chi_A$.
Recalling the relationship between $\chi$ and $\chi_A$, this gives us a
blue $K_4$ under $\chi$ with vertices $x, x+id, x+jd, x+(i+j)d$. This is
the desired 2-dimensional Hilbert cube.
\end{proof}

\section{Coloring $k$-ary trees}\label{se:trees}

In order to achieve Theorem~\ref{th:hilbert} for any number of colors,
we will first require a Ramsey-type theorem for $k$-ary trees.

\begin{notation}
We use $[k]^*$ to denote all finite sequences (strings) of elements of
$[k] = \{1, \ldots, k\}$. If $s, t \in [k]^*$, we use
$s \cdot t$ to denote concatenation --- all characters of $s$ followed
by all characters of $t$.
\end{notation}

\begin{definition}
A perfect $k$-ary tree $T_n^{(k)}$ of height $n$ is the collection of nodes
\[
T_n^{(k)} = \{ s \in \{1, \ldots, k\}^j \mid 0 \le j \le n \}.
\]
We say $\lambda$, the empty string, is the root of the tree.
A node $s$ has $k$ children, $s \cdot 1, \ldots, s \cdot k$.
The child $s \cdot i$ together with all of its descendants forms
the $i^{\hbox{th}}$ subtree of $s$, rooted at $s \cdot i$.
We see that $s$ has $k$ subtrees in all.
The $j^{\hbox{th}}$ level of $T_n^{(k)}$ consists of all those strings
of length exactly $j$. The substrings of $s$ are called the {\it ancestors}
of $s$. The nodes at level $n$ are called leaves. If $s$ is a substring
of $t$, we say that the {\it path} from $s$ to $t$ is the set of nodes $r$
which are both superstrings of $s$ and substrings of $t$ (including $s$ and
$t$). The length of the path is the difference in lengths of $s$ and $t$.
\end{definition}

Since we are only interested in perfect $k$-ary trees in this paper, we
will usually refer to them simply as ``$k$-ary trees'', or ``trees''
if $k$ is implied.

Next, we define what it means to embed one $k$-ary tree into another.

\begin{definition}
Let $T, R$ be two $k$-ary trees. An embedding of $T$ into $R$ is a map
$\varphi$ from the nodes of $T$ into the nodes of $R$ with the
following properties:
\begin{enumerate}
\item There is an increasing function $x$ from levels of $T$ to levels of $R$
  so that, if $t$ is on the $i^{\hbox{th}}$ level of $T$, then $\varphi(t)$
  is on the $x(i)^{\hbox{th}}$ level of $R$.
\item If $s, t$ are nodes in $T$ , and $s$ is contained in the
  $i^{\hbox{th}}$ subtree of $t$, then $\varphi(s)$ is contained in the
  $i^{\hbox{th}}$ subtree of $\varphi(t)$.
\end{enumerate}
\end{definition}

We now state the goal of this section:

\begin{lemma}\label{le:embedding}
For every $k, c, n$, there is a number $E = E(k,c,n)$ so that
every $c$-coloring of the $k$-ary tree of height $E$ yields
a monochromatic embedding of the $k$-ary tree of height $n$.
\end{lemma}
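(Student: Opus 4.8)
The plan is to prove the lemma by induction on the target height $n$, reading the statement as a monochromatic \emph{strong subtree} theorem: an embedding of $T_n^{(k)}$ is exactly a level-aligned, direction-preserving copy sitting inside $T_E^{(k)}$, and I want every node of that copy to receive one color. The base case $n = 0$ is immediate, since a single node is monochromatic, so $E(k,c,0) = 0$; and when $k = 1$ the tree is a single path, where a run of $n+1$ equally colored nodes is produced by the pigeonhole principle as soon as $E \ge cn$. These easy cases already expose the difficulty. For $k = 1$ there is only one branch to control, but for $k \ge 2$ the single global level function forces the $k$ sub-copies hanging below any node to branch at the \emph{same} levels, so they cannot be chosen one at a time.

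The engine I would use for the inductive step is the Hales--Jewett theorem, or equivalently the Graham--Rothschild theorem on $n$-parameter sets, which (as the introduction notes) already implies the paper's main result. The bridge is a dictionary between strong subtrees and parameter words: identify each node of $T_E^{(k)}$ with a string over $[k]$, and identify a height-$n$ strong subtree with an $n$-parameter word whose $n$ parameter coordinates are the branching levels and whose fixed coordinates are the forced fillers between them. Substituting the $k$ letters of $[k]$ into the parameters recovers the $k^n$ leaves of the copy, and the prefixes determined at the parameter levels recover its internal nodes. Under this dictionary a $c$-coloring of the nodes pulls back to a $c$-coloring of the parameter words, a monochromatic parameter subspace of a long enough word is precisely a monochromatic embedded $T_n^{(k)}$, and choosing $E$ to be the relevant Graham--Rothschild number finishes the proof.

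If instead I want the argument self-contained and maximally flexible for the later generalizations, I would unfold this into a Hales--Jewett-style double induction on $n$ and $c$. The naive recursion --- pass from $E(k,c,n-1)$ to $E(k,c,n)$ by finding a monochromatic copy of $T_{n-1}^{(k)}$ inside each of the $k$ subtrees below a node and then attaching them --- fails, because the separate recursive calls return copies with \emph{mismatched} level functions that cannot be glued. The fix is to interleave the recursion with a \emph{simultaneous} (product) Hales--Jewett step that aligns all $k$ branchings at once and locks the root and the $k$ sub-copies into a common color class; the resulting bound $E(k,c,n)$ then comes out as a composition of $E(k,c,n-1)$ with a Hales--Jewett number for $k$ letters and $c$ colors.

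The step I expect to be the main obstacle is exactly this synchronization. Because the embedding insists on one global level function, the $k$ subtrees below a node, the node itself, and the intermediate branch-nodes must all be aligned and driven into the same color class at the same time, and no amount of iterated pigeonhole --- which already suffices for the $k = 1$ path --- accomplishes this for $k \ge 2$; that is precisely the Hales--Jewett / parameter-set content. The remaining bookkeeping is to guarantee that the \emph{internal} nodes of the copy, not only its $k^n$ leaves, land in the monochromatic class, which under the parameter-word dictionary is automatic once one treats coloring the nodes and coloring the parameter words of every dimension up to $n$ as the same problem.
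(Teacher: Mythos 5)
Your plan correctly diagnoses the central difficulty --- the single global level function forces the $k$ sub-copies below a node to branch at the same levels, so they cannot be found by independent recursive calls --- and your instinct that the lemma lives in the Hales--Jewett / Graham--Rothschild orbit is sound (the paper's own footnote records that the main theorem follows from Graham--Rothschild). But as written the proposal has a genuine gap at exactly the step you flag as the main obstacle, and it is papered over twice. First, the ``dictionary'' paragraph ends with ``choosing $E$ to be the relevant Graham--Rothschild number finishes the proof,'' and the closing claim that controlling the internal nodes ``is automatic'' is not correct as stated: a monochromatic combinatorial subspace of $[k]^N$ constrains only the $k^n$ leaves, while the internal nodes of the corresponding strong subtree are \emph{prefixes} of length $x(i)<N$, which are not $m$-parameter subwords of the chosen $n$-parameter word in the Graham--Rothschild sense (padding a prefix with trailing parameters does not produce a subword of the fixed subspace). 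Some real mechanism --- a product coloring over levels, or an iteration of the argument over dimensions --- is needed to drive the internal nodes into the same color class, and none is supplied. Second, the ``self-contained'' fallback names the fix (``a simultaneous (product) Hales--Jewett step that aligns all $k$ branchings at once and locks the root and the $k$ sub-copies into a common color class'') but never specifies what is being colored, what the monochromatic object is, or how the root's color is forced to agree with the sub-copies'. That synchronization-plus-color-selection is the entire content of the lemma.

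For comparison, the paper resolves exactly this in two moves. It first proves the height-$1$ case $f(k,c)=E(k,c,1)$ by an elementary induction on $c$ (pigeonhole down the all-$1$s path, then either a red balancing set or a monochromatic-free embedded subtree carrying only $c-1$ colors). Then, doubling the height ($n=2^{\ell}-1$), it finds a monochromatic top tree, packages the $Y=k^{2^{\ell}}$ subtrees hanging off its leaves into a single $c^{Y}$-coloring so that one application of the inductive hypothesis synchronizes \emph{all} of their level functions at once, iterates this $f(Y,c)$ times, and finally applies the height-$1$ lemma \emph{at arity $Y$} to the resulting $c$-coloring $\psi$ of the synchronized trees to select a root tree and $Y$ descendant trees of a common color. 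The product-coloring step is the rigorous version of your ``simultaneous HJ step,'' and the arity-$Y$ reuse of the height-$1$ lemma is the color-selection device your sketch is missing; you would need to supply both (or their Hales--Jewett analogues, worked out in detail) before this becomes a proof.
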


We say that a coloring is {\it $n$-balanced} if the conclusion holds.

\begin{example}
A coloring $\chi$ of the a tree $T$ is 1-balanced if there is some node
$r$, and strings $s_1, \ldots, s_k \in [k]^j$ for some $j$, so that
\[
r, r \cdot 1 \cdot s_1, \ldots, r \cdot k \cdot s_k
\]
are all the same color.
This corresponds to the embedding $\varphi$ of $T_1^{(k)}$ into $T$
given by $\varphi(\lambda) = r, \varphi(i) = r \cdot i \cdot s_i$.
\end{example}

\begin{figure}[t]
  \centering
  \includegraphics[scale=0.5]{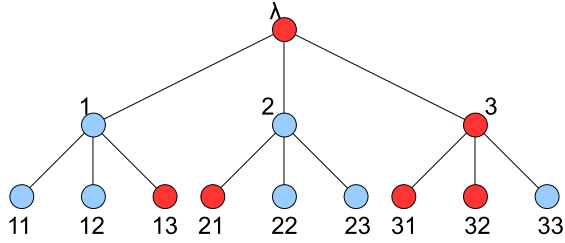}
  \caption{This 3-ary tree is 1-balanced. The nodes $\lambda$, 13, 21,
    and 31 are all red}
  \label{fig:tree}
\end{figure}

We prove Lemma~\ref{le:embedding} by first finding
$f(k, c) = E(k, c, 1)$, and repeatedly applying that result.

\begin{lemma}\label{le:star}
There is a function $f(k, c)$ so that, if $n \ge f(k,c)$, then every
$c$-coloring of the perfect $k$-ary tree of depth $n$ is 1-balanced.
\end{lemma}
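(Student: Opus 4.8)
The plan is to induct on the number of colours $c$. The base case $c=1$ is immediate: in a one-coloured tree of height $1$ the root together with its $k$ children is monochromatic and already realises the embedding of $T_1^{(k)}$, so $f(k,1)=1$. For the inductive step I will establish the recursion $f(k,c)\le k\bigl(f(k,c-1)+1\bigr)$, which unwinds to a bound of order $k^{c}$. It is cleanest to argue by contradiction through the notion of $1$-balance: I assume a $c$-colouring $\chi$ of the perfect $k$-ary tree of height $N=k\bigl(f(k,c-1)+1\bigr)$ that is \emph{not} $1$-balanced, and I aim to exhibit a large embedded perfect subtree on which only $c-1$ colours occur, so that the inductive hypothesis forces it to be $1$-balanced after all.

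First I examine the root $\lambda$, with colour $\gamma_0=\chi(\lambda)$. If there were a single level $L$ at which every one of the $k$ subtrees of $\lambda$ contained a $\gamma_0$-coloured node, then $\lambda$ together with one such node from each subtree would be a monochromatic copy of $T_1^{(k)}$, contradicting non-$1$-balance. Hence for every level $L\in\{1,\dots,N\}$ at least one subtree of $\lambda$ has \emph{no} $\gamma_0$-node at level $L$; call such a subtree a witness for $L$. Assigning to each of the $N$ levels one witness among the $k$ subtrees and applying the pigeonhole principle, some fixed subtree $T_{i^{*}}$ is a witness for at least $N/k=f(k,c-1)+1$ levels. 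In other words, $T_{i^{*}}$ has at least $f(k,c-1)+1$ levels that are entirely free of the colour $\gamma_0$.

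Next I use these $\gamma_0$-free levels to build the desired embedded copy. Choosing $\gamma_0$-free levels $\ell_0<\dots<\ell_{f(k,c-1)}$ of $T_{i^{*}}$, I embed the perfect $k$-ary tree of height $f(k,c-1)$ into $T_{i^{*}}$ by sending level $j$ of the small tree to level $\ell_j$: the root maps to any node at level $\ell_0$, and a child in direction $a$ is routed into the corresponding child-subtree and then down an arbitrary path to level $\ell_{j+1}$. Because the host tree is perfect this is always possible and both embedding conditions hold. Every node of this copy lies on a $\gamma_0$-free level, so it is coloured using only the $c-1$ colours of $[c]\setminus\{\gamma_0\}$. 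Now I invoke the inductive hypothesis: since embeddings compose and colours are inherited, a monochromatic copy of $T_1^{(k)}$ inside the embedded tree would pull back to one inside the whole tree, so non-$1$-balance of $\chi$ forces the embedded tree to be non-$1$-balanced too. But it is a $(c-1)$-colouring of a perfect $k$-ary tree of height $f(k,c-1)$, which by induction \emph{must} be $1$-balanced — a contradiction, proving the recursion and hence the lemma.

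The main obstacle is precisely this reduction step that trades one colour for depth. The conceptual move is to exploit the root's own colour $\gamma_0$ together with non-$1$-balance to locate a single subtree in which $\gamma_0$ is absent on many complete levels, and then to realise an embedded perfect subtree that lives only on those levels, thereby genuinely eliminating a colour. The supporting facts — that a prescribed increasing set of levels can always be realised by an embedding inside a perfect tree, and that non-$1$-balance is inherited under composition of embeddings — are routine, but they are exactly what makes the induction legitimate and so should be verified carefully.
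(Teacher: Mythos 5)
Your proof is correct, and while it shares the paper's overall skeleton --- induct on the number of colours, locate a single subtree with many levels entirely free of one colour, embed a smaller perfect tree into those levels, and recurse on the induced $(c-1)$-colouring --- the step that produces the colour-free levels is genuinely different and simpler. The paper first walks down the path to $1^n$, pigeonholes to find a colour repeated $2+(k-1)f(k,c)$ times on that path, takes the topmost occurrence $r$ as the root of the prospective balancing configuration, and then notes that each relevant level is already ``covered'' in the first subtree of $r$, so only the remaining $k-1$ subtrees enter the second pigeonhole; this costs a factor of $c+1$ for the repeated colour and yields the recursion $f(k,c+1)\le (c+1)\bigl(1+(k-1)f(k,c)\bigr)$, i.e.\ a bound of roughly $(k-1)^{c-1}c!$. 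You dispense with the path entirely: the root $\lambda$ itself, with its colour $\gamma_0$, already forces every one of the $N$ levels to have some subtree with no $\gamma_0$-node at that level (else a monochromatic $T_1^{(k)}$ appears), and a single pigeonhole over all $k$ subtrees and all $N$ levels gives the required $f(k,c-1)+1$ free levels. Your recursion $f(k,c)\le k\bigl(f(k,c-1)+1\bigr)$ therefore unwinds to $O(k^{c-1})$, which for large $c$ is substantially smaller than the paper's factorial-type bound. The two supporting facts you flag --- that any increasing sequence of levels can be realised by an embedding in a perfect tree, and that $1$-balance pulls back along compositions of embeddings --- are indeed routine and are used implicitly in the paper's proof as well, so your argument is a clean improvement rather than a gap.
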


We take the proof slowly to delicately handle each part.

\begin{proof}
When $c=1$ the nodes $\lambda, 1, 2, \ldots, k$ must all be the same
color, so they 1-balance the tree. Thus $f(k, 1) = 1$.

Consider the case $k=1$, so that each level has a unique
node. By the pigeonhole principle, the $c+1$ nodes within levels
$0, 1, \ldots, c$ must contain two with the same color. Thus
$f(1,c) = c$.

We begin the same way for $k = 2$. Since we know $f(2, 1)=1$, we work by
induction on $c$. We will show $f(2, c+1) \le (c+1)(1+f(2,c))$.
Call this number $n$.

Let $\chi:T_n^{(2)} \rightarrow [c+1]$ be a $(c+1)$-coloring of the
tree of height $n$. Consider the path from the root to the node $1^n$.
The path contains $n+1$ nodes, so some color is represented at least
\[
\left \lceil \frac{n+1}{c+1} \right \rceil = 2+f(2,c)
\]
times. Call the repeated color ``red.'' Call the levels of these red
nodes $j(-1), j(0), j(1), \ldots, j(f(2,c))$.
Since we looked down the path of all 1s, the corresponding nodes are
\[
\begin{array}{rcl}
r          &   =    & 1^{j(-1)}         \\
s_0        &   =    & 1^{j(0)}          \\
           & \vdots &                   \\
s_{f(2,c)} &   =    &    1^{j(f(2,c))}. \\
\end{array}
\]
Consider $r$ along with any of the other red nodes, $s_i$.
These may be part of a balancing triple --- if any descendent $t$ of
$r \cdot 2$ on level $j(i)$ is also red, then $r, s_i, t$ balance the
tree. Thus, if the tree is to be unbalanced, all of the levels
$j(0), \ldots, j(f(2,c))$ within the second subtree of $r$ must be
entirely non-red. We will now use the definition of $f(2,c)$ to show
that this tree is in fact 1-balanced by these non-red nodes.

Consider the map from the nodes of $T=T_{f(2,c)}^{(2)}$ into our tree
given by
\[
\varphi(\lambda) =
  r \cdot 2^{j(0) - j(-1)} = 1^{j(-1)} \cdot 2^{j(0) - j(-1)},
\]
\[
\varphi(a_1 \ldots a_{\ell-1} a_\ell) =
  \varphi(a_1 \ldots a_{\ell-1}) \cdot (a_\ell)^{j(\ell) - j(\ell-1)}.
\]
We make the following observations:
\begin{enumerate}
\item Nothing in the image of $\varphi$ is red (unless the
  coloring is 1-balanced).
\item All nodes on level $i$ of $T$ are mapped to level $j(i)$ of our tree.
\item If $t$ is contained in the $i^{\hbox{th}}$ subtree of $s$, then
  $\varphi(t)$ is contained in the $i^{\hbox{th}}$ subtree of $\varphi(s)$.
\end{enumerate}
We color $T$ by $\chi^*(s) = \chi(\varphi(s))$, the
coloring induced by $\varphi$. Observation 1 tells us that $\chi^*$
is actually a $c$-coloring.
By the definition of $f(2,c)$, we know that there are some nodes
$w, w \cdot 1 \cdot s_1, w \cdot 2 \cdot s_2$ (with the latter two
on the same level) which are all the same color under $\chi^*$.
Thus we see that $\varphi(w), \varphi(w \cdot 1 \cdot s_1),$ and
$\varphi(w \cdot 2 \cdot s_2)$ must be the same color under $\chi$.
By observations 2 and 3, these nodes 1-balance the original tree.

\vspace{1pc}
\noindent
Finally, for $k \ge 3$, we follow a very similar idea. We will show
\[
f(k, c+1) \le (c+1)(1+(k-1)f(k,c)) = n.
\]

Let $\chi:T_n^{(k)} \rightarrow [c+1]$ be a $(c+1)$-coloring of the $k$-ary
tree of height $n$. Consider the path from the root to the node $1^n$.
The path contains $n+1$ nodes, so some color is represented at least
\[
\left \lceil \frac{n+1}{c+1} \right \rceil = 2+(k-1)f(k,c)
\]
times. Call the repeated color ``red.'' Call the levels of these red
nodes $j(-1), j(0), j(1), \ldots, j((k-1)f(k,c))$.
Since we looked down the path of all 1s, the corresponding nodes are
\[
\begin{array}{rcl}
r               &   =    & 1^{j(-1)}           \\
s_0             &   =    & 1^{j(0)}            \\
                & \vdots &                     \\
s_{(k-1)f(k,c)} &   =    & 1^{j((k-1)f(k,c))}. \\
\end{array}
\]
Consider $j(-1)$ along with any of the other red nodes, $s_i$.
These may be part of a balancing set --- if every subtree has a red node on
the same level, then the coloring is 1-balanced.
Thus, if the tree is to be unbalanced, each of the levels
$j(0), \ldots, j((k-1)f(k,c))$ must be entirely non-red in at least one
of the $k-1$ subtrees of $r$. By the pigeonhole principle,
some subtree of $r$, say the $p^{\hbox{th}}$ subtree, must be colored
such that at least $1+f(k,c)$ of the levels $j(0), \ldots, j((k-1)f(k,c))$
are entirely non-red.
Label these levels $x(0), x(1), \ldots, x(f(k,c))$. We will now use
the definition of $f(k,c)$ to show that this tree is in fact balanced
by these non-red nodes.

Consider the map from $T=T_{f(k,c)}^{(k)}$ into our tree given by
\[
\varphi(\lambda) =
  r \cdot p^{x(0) - x(-1)} = 1^{x(-1)} \cdot p^{x(0) - x(-1)},
\]
\[
\varphi(a_1 \ldots a_{\ell-1} a_\ell) =
  \varphi(a_1 \ldots a_{\ell-1}) \cdot (a_\ell)^{x(\ell) - x(\ell-1)}
\]
We now make the same observations as before:
\begin{enumerate}
\item Nothing in the image of $\varphi$ is red (unless the
  coloring is 1-balanced).
\item All nodes on level $i$ of $T$ are mapped to level $x(i)$ of our tree.
\item If $t$ is contained in the $i^{\hbox{th}}$ subtree of $s$, then
  $\varphi(t)$ is contained in the $i^{\hbox{th}}$ subtree of $\varphi(s)$.
\end{enumerate}
We color $T$ by $\chi^*(s) = \chi(\varphi(s))$, the coloring induced
by $\varphi$. Observation 1 tells us that $\chi^*$
is actually a $c$-coloring.
By the definition of $f(k,c)$, we know that there are some nodes
$w, w \cdot 1 \cdot s_1, \ldots, w \cdot k \cdot s_k$ (with the last $k$
on the same level) which are all the same color under $\chi^*$.
Thus we see that $\varphi(w), \varphi(w \cdot 1 \cdot s_1), \ldots,
\varphi(w \cdot k \cdot s_k)$ must be the same color under $\chi$.
By observations 2 and 3, these nodes 1-balance the original tree.
\end{proof}

The solution to the recurrence bounding $f(k,c)$ for $c,k \ge 2$ gives
\[
f(k,c) = \lfloor e^{1/(k-1)} (k-1)^{c-1} c! \rfloor,
\]
though the true value may be lower.

We may now prove the existence of $E(k,c,n)$.

\vspace{1pc}
\begin{proof}[Proof of Lemma~\ref{le:embedding}]
We only show the result for $n = 2^{\ell}-1$, since this implies all
smaller values. The case $\ell = 1$ is Lemma~\ref{le:star}.

Suppose $E(k,c',2^{\ell}-1)$ is known for all values $c'$. We will
find a bound for $E(k,c,2^{\ell+1}-1)$.

Let $\chi_0 = \chi$ be a $c$-coloring of a large $k$-ary tree. We ignore
the specific height for now, but will determine a bound at the end.

By induction, $\chi_0$ gives a monochromatic embedding $\varphi$
of a $k$-ary tree of height $2^{\ell}-1$ into our large tree, hitting only
levels up to $E(k,c,2^{\ell}-1)$. Call the image $T_{\lambda}$,
and its color $\psi(\lambda)$. $T_{\lambda}$ has $k^{2^\ell - 1}$ leaves,
and each has $k$ subtrees, so we have a total of $Y:=k^{2^\ell}$ subtrees
coming off of $T_{\lambda}$. The roots of these subtrees are given by
\[
\{ v_s = \varphi(t) \cdot i \mid t \in [k]^{2^\ell-1}, s = t \cdot i \}
\]
To each $t \in [k]^*$ we associate a map
$\chi_1(t)$ from $[k]^{2^\ell}$ to $[c]$, given by
\[
\chi_1(t)(s) = \chi(v_s \cdot t).
\]
Note that there are ``only'' $c^Y$ such maps $\chi_1(t)$.
Since each $t$ is mapped to one of $c^Y$ elements, we treat $\chi_1$
as a $c^Y$-coloring of a $k$-ary tree. We think of the $Y$ subtrees of
$T_{\lambda}$ as one tree, where each node is given a list of $Y$ colors,
coming from the vertex in $t$'s position in each of these subtrees.

Because $\chi_1$ is a $c^Y$-coloring of a $k$-ary tree, we know that
there is an embedded $k$-ary tree contained within levels
$0, 1, \ldots, E(k,c^Y,2^{\ell}-1)$ which is monochromatic under
$\chi_1$. Looking back to $\chi$, this means we really have $Y$ trees,
each monochromatic. We label these trees by $T_s$ for $s \in [k]^{2^\ell}$,
based on their connection to $T_{\lambda}$.
Note that each $T_s$ is in the same position relative to $v_s$.
In particular, all the nodes at level $i$ of some $T_s$ are on the same level
in the original tree (regardless of the choice of $s$).
This means that, if all these trees were red, taking them all together
with $T_\lambda$ would give us our monochromatic embedded tree of
height $2^{\ell+1}-1$. Would that we were so lucky.

Instead, all we know is that, for each $s$, the entire tree $T_s$
has some color; call it $\psi(s)$.

We now have $k^{2^\ell}$ trees, each with $k^{2^{\ell}-1}$ leaves,
which in turn each have $k$ subtrees.
Altogether, that gives us $Y^2 = k^{2 \cdot 2^{\ell}}$ subtrees.
We repeat the above argument to get a $c^{Y^2}$-coloring, $\chi_2$,
of the original $k$-ary tree, corresponding to the colors in the subtrees.
We again find a large embedded tree which is monochromatic under $\chi_2$,
and it again corresponds to many trees $T_s$,
each with color $\psi(s)$ under $\chi$. But this time
\[
s \in [k]^{2 \cdot 2^{\ell}}
  = \left( [k]^{2^\ell} \right)^2.
\]

We repeat this process, reaching $\chi_{f(Y, c)}$. The monochromatic
trees here are $T_s$ with color $\psi(s)$, where
\[
s \in [k]^{f(Y,c) \cdot 2^{\ell}}
  = \left( [k]^{2^\ell} \right)^{f(Y,c)}.
\]
We consider the trees $\{T_s\}$ to be the nodes of a large $Y$-ary tree,
colored by $\psi$. Since $\psi$ is a $c$-coloring, and this tree has
height $f(Y,c)$, we get some monochromatic embedded subtree of height 1.
Expanding the nodes as the full trees they are, and observing the
relative structure, we find that these trees form a monochromatic
embedding of a $k$-ary tree of height $2^{\ell+1}-1$, as desired.

In all, we needed to go a depth of
\[
E(k,c,2^\ell-1) + E(k,c^Y,2^\ell-1) + \ldots + E(k,c^{Y^{f(Y, c)}},2^\ell-1),
\]
where again $Y=k^{2^\ell}$. This gives a bound on
$E(k, c, 2^{\ell+1}-1).$
\end{proof}

\section{The full result}\label{se:full}

In this section, we give the full proof of Theorem~\ref{th:hilbert},
first for any number of colors, but $k=2$, and then for any $k$ as
well. As before, we view pairs of integers as ordered pairs $(x,y)$
with $x < y$. When we have a grid $\{(x+id,y+jd)\}$ for a range of
values $i$ and $j$, we will say the grid is in position $(x,y)$ with
scale $d$.

\subsection{Any colors, two dimensions}\label{se:allcolors}

\begin{proof}[Proof of Theorem~\ref{th:hilbert} when $k=2$]
As in the proof of Lemma~\ref{le:embedding}, we first give the arguments
ignoring the numbers involved, and in the next section we determine a
bound on $n(r,2)$.

Begin with an $r$-coloring $\chi_0 = \chi$ of a large initial grid,
$G_{\lambda}$. By Gallai-Witt, find a large monochromatic subgrid above the
diagonal $x=y$, with color $c_\lambda$ in position $(x_0, y_0)$
with scale $d_0$.

As in the proof with two colors, this yields two grids, $G_1$ and $G_2$
of equal size, in positions $(x_0, x_0)$ and $(y_0, y_0)$ respectively,
both with scale $d_0$. Note that these grids contain points on, above,
and below the diagonal $x=y$ --- we only consider those points above
the diagonal. As in the proof in Section~\ref{se:twocolors}, if two
points in these grids of the form $(x_0+id, x_0+jd)$ and $(y_0+id, y_0+jd)$
are both the same color as the grid $G_{\lambda}$, then we get our
monochromatic Hilbert cube of dimension 2.
The colorings of $G_1$ and $G_2$ correspond to $\chi_A$ and $\chi_B$ from
the initial proof. We consider a $r^2$-coloring of a new grid, where the point
$(i,j)$ is colored by the pair
\[
\chi_1(i,j) = (\chi_0(x_0 + id, x_0 + jd), \chi_0(y_0 + id, y_0 + jd)).
\]

We now use Gallai-Witt with $r^2$ colors, to find a large subgrid under
$\chi_1$ with color $(c_1, c_2)$ in position $(x_1, y_1)$ with scale $d_1$.
This grid really corresponds to two grids: one of color $c_1$ in position
$(x_0 + x_1 d_0, x_0 + y_1 d_0)$, and the other of color $c_2$ in position
$(y_0 + x_1 d_0, y_0 + y_1 d_0)$. Both grids have scale $d_0 d_1$, and they
are entirely contained in grids $G_1$ and $G_2$ respectively.

Again we pass to subgrids. The grid in $G_1$ yields two subgrids $G_{11}$
and $G_{12}$, in positions $(x_0 + x_1 d_0, x_0 + x_1 d_0)$ and
$(x_0 + y_1 d_0, x_0 + y_1 d_0)$ respectively, both with scale $d_0 d_1$.
Likewise $G_2$ give us two subgrids, $G_{21}$, and $G_{22}$. Now we
have more ways to win: the colorings of $G_{11}$ and $G_{12}$ restrict
each other, as do $G_{21}$ and $G_{22}$, and both of $G_{11}, G_{12}$
restrict both of $G_{21}, G_{22}$. Note that, whether the position of
the grid involves $x_0$ or $y_0$ is determined by the first part of
the subscript, and whether it involves $x_1$ or $y_1$ is dependent
on the next part.

\begin{figure}[t]
  \centering
  \includegraphics[scale=0.5]{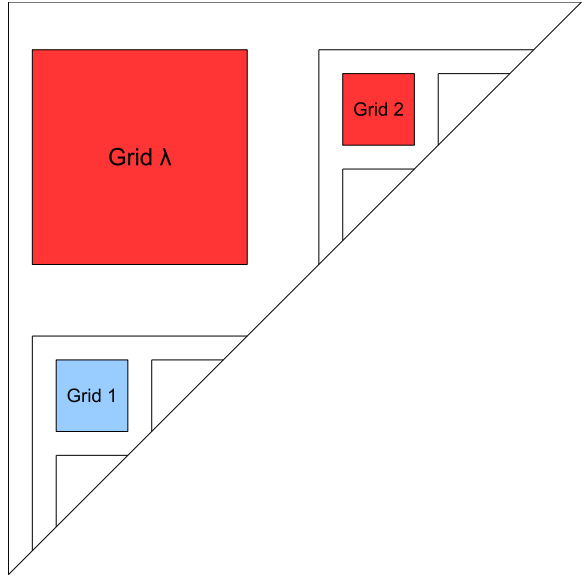}
  \caption{The sequence of subgrids}
  \label{fig:deepgrid}
\end{figure}

The next step, which we briefly state, is to define a grid-coloring $\chi_2$
with $r^4$ colors corresponding to each of the four grids
$G_{11}, G_{12}, G_{21}, G_{22}$. We find a subgrid of color
$(c_{11}, c_{12}, c_{21}, c_{22})$ under this coloring, which corresponds
to four grids, which further restrict one another.

Continue this for $f(2,r)+1$ steps, so that the final grids are indexed
by strings of length $f(2,r)$. The ``large'' monochromatic grid we find
under $\chi_{f(2,r)-1}$ need only be a $2 \times 2$ grid, giving $G_s$
a single off-diagonal point for all $s$ of length $f(2,r)$. The color of
this point is $c_s$.

We now recognize the map $s \mapsto c_s$ as an $r$-coloring of the perfect
binary tree of height $f(2,r)$. By the definition of $f$, this coloring must
be 1-balanced, meaning there is a node $\sigma$ and
two children $s = \sigma \cdot 1 \cdot u$ and $t = \sigma \cdot 2 \cdot v$,
all the same color, where $u,v \in \{1,2\}^\ell$ for some $\ell$.
Call this color red.

Write $\sigma = \sigma_0 \sigma_1 \ldots \sigma_{k-1}$. Since $\sigma$ is red,
the monochromatic grid found in grid $G_{\sigma_k}$
is red. Let
\[
z_i(\sigma) = \left\{ \begin{array}{ll}
x_i & \hbox{if } \sigma_i = 1 \\
y_i & \hbox{if } \sigma_i = 2.\\
\end{array} \right.
\]
Then the grid $G_\sigma$ is in position $(X(\sigma),Y(\sigma))$, where
\[
\begin{array}{rcl}
X(\sigma) & = & z_0(\sigma) + d_0 (z_1(\sigma) + d_1( \ldots (z_{k-1}(\sigma) + d_{k-1}x_k) \ldots)), \\
Y(\sigma) & = & z_0(\sigma) + d_0 (z_1(\sigma) + d_1( \ldots (z_{k-1}(\sigma) + d_{k-1}y_k) \ldots)). \\
\end{array}
\]
and has scale $D = d_0 d_1 \cdots d_k$. Note that the only difference
between $X$ and $Y$ is the $x_k$ and $y_k$ respectively in the inner-most
term.

Now we look at the grids $G_s$ and $G_t$. We will only use a single point
from these grids. Define $z_i, X$, and $Y$ in the same way as above for
$s$ and $t$. Noting that
\[
s_0=\sigma_0, s_1=\sigma_1, \ldots, s_{k-1}=\sigma_{k-1}, s_k=1 \hbox{ and}
\]
\[
t_0=\sigma_0, t_1=\sigma_1, \ldots, t_{k-1}=\sigma_{k-1}, t_k=2,
\]
we see that $G_s$ is in position $(X(s),Y(s))$ with
\[
\begin{array}{rcl}
X(s) & = & X(\sigma) + D (x_k + d_k (\ldots (z_{k+\ell-1}(s) + d_{k+\ell-1} x_{k+\ell}) \ldots), \\
Y(s) & = & X(\sigma) + D (x_k + d_k (\ldots (z_{k+\ell-1}(s) + d_{k+\ell-1} y_{k+\ell}) \ldots), \\
\end{array}
\]
and similarly $G_t$ is in position $(X(t),Y(t))$ with
\[
\begin{array}{rcl}
X(t) & = & Y(\sigma) + D (y_k + d_k (\ldots (z_{k+\ell-1}(t) + d_{k+\ell-1} x_{k+\ell}) \ldots), \\
Y(t) & = & Y(\sigma) + D (y_k + d_k (\ldots (z_{k+\ell-1}(t) + d_{k+\ell-1} y_{k+\ell}) \ldots). \\
\end{array}
\]

We claim that $X(s), X(t), Y(s), Y(t)$ form our Hilbert cube.
Indeed, writing $a=X(s)$, $b=X(t)-X(s)=Y(t)-Y(s)$, and
\[
c = D d_k \cdots d_{k+\ell} (y_{k+\ell+1} - x_{k+\ell+1}),
\]
we see that they have the form $a, a+b, a+c, a+b+c$ respectively.

Now consider the colors of the six points among these values (still
only looking at points above the line $x=y$). Since the points
$(X(s),Y(s))$ and $(X(t),Y(t))$ are in $G_s$ and $G_t$ respectively,
we know that both points are red.

Now we recognize that these values are given by
\[
\begin{array}{rcl}
X(s) & = & X(\sigma) + iD, \\
Y(s) & = & X(\sigma) + jD, \\
X(t) & = & Y(\sigma) + iD, \\
Y(t) & = & Y(\sigma) + jD, \\
\end{array}
\]
so the four points we need look like
\[
\begin{array}{rcl}
(X(s),X(t)) & = & (X(\sigma) + iD, Y(\sigma) + iD) \\
(X(s),Y(t)) & = & (X(\sigma) + iD, Y(\sigma) + jD) \\
(Y(s),X(t)) & = & (X(\sigma) + jD, Y(\sigma) + iD) \\
(Y(s),Y(t)) & = & (X(\sigma) + jD, Y(\sigma) + jD).\\
\end{array}
\]
By design, these fall into the grid $G_\sigma$, so these points
are red as well.
\end{proof}

\subsection{Upper bounds}\label{se:bounds}
The process repeats to a depth of $f(2,r)$, at which point we have
$2^{f(2,r)}$ grids, meaning $r^{2^{f(2,r)}}$ colors. At this level, we are
looking for a square, so these grids must have size
\[
S_{f(2,r)} = 2.
\]
At the prior level, our $2^{f(2,r)-1}$ grids must have monochromatic subgrids
of size $S_{f(2,r)}$, and the joint coloring has $r^{2^{f(2,r)-1}}$ colors.
Thus
\[
S_{f(2,r)-1} = 2 GW(S_{f(2,r)}, r^{2^{f(2,r)-1}}),
\]
where the factor of 2 allows us to take the top-left quadrant of the grid.
As before, this ensures distinct values in the $x$ and $y$ components.
Repeating this reasoning, we find that
\[
S_{k} = 2 GW(S_{k+1}, r^{2^{k}}),
\]
which leaves us with this bound for the size of the initial grid:
\[
n(r,2) \le S_{0} = 2 GW(S_1, r).
\]

\subsection{Any colors, any dimensions}
We have now done all of the hard work. In order to prove the full result
at this point, we only need to reconsider the proof for $k=2$.

\begin{repeatedtheorem}{\ref{th:hilbert}}
For all $r, k$, there is a number $n = n(r, k)$ so that for any
$r$-coloring of the edges of the complete graph on $[n]$, there
is a Hilbert cube $H = H(a; b_1, \ldots, b_k)$ so that all edges within
$H$ are monochromatic.
\end{repeatedtheorem}

\vspace{1pc}
\begin{proof}
Let $\chi$ be an $r$-coloring of a large grid.
Repeat the process from the proof in Section~\ref{se:allcolors},
now continuing until we have a tree of height $E(2,r,k-1)$.

By Lemma~\ref{le:embedding}, there is an embedded tree of height $k-1$
which is entirely, say, red. Call the embedding $\varphi$, so the nodes
are labeled $\varphi(s)$ for $s \in \{1,2\}^j$ for $0 \le j < k$.

Let $G_s$ denote the red grid corresponding to the node
$\varphi(s)$.\footnote{In the previous proof, we would have called
this $G_{\varphi(s)}$, but here we have no need to refer to the nodes outside
of our monochromatic tree.}
Say this grid is in position $(X(s), Y(s))$. If $i$ is the length of $s$,
then the scale of $G_s$ is $d_0 d_1 \cdots d_i$.

For each $s \in \{1,2\}^{k-1}$, consider the red point
$(X(s),Y(s)) \in G_s$. We claim that the $2^k$ values
\[
\{ X(s) \mid s \in \{1,2\}^{k-1} \} \cup \{ Y(s) \mid s \in \{1,2\}^{k-1} \}
\]
have the form $a + \sum_{i \in I} b_i$ and comprise an entirely red clique.

As we saw in the previous proof, for $s$ on level $\ell-1$,
and $s \cdot 1, s \cdot 2$ on level $\ell$,
\[
X(s \cdot 2) - X(s \cdot 1) = Y(s) - X(s)
  = d_0 d_1 \cdots d_{\ell-1} (y_{\ell} - x_{\ell}).
\]
Inspired by this, we define
\[
b_{\ell} = Y(s) - X(s)
\]
for $s$ on level $\ell-1$.

Now set $a = X(1^{k-1})$. Let $s = s_1 \cdots s_{k-1} \in \{1,2\}^{k-1}$.
Let $I = \{i \mid s_i = 2\} \subseteq [k-1]$. This gives us
$X(s) = a + \sum_{i \in I} b_i$ and $Y(s) = a + b_k + \sum_{i \in I} b_i$.

This tells us the numbers we are looking at really do have the desired form.
We only need to check that all the edges among these values are red.

Let $s$ be any string on level $k-1$. By virtue of $(X(s),Y(s))$ being
a point in the grid $G_s$, we know that edge is red. Now let $t$ be another
string on level $k-1$, and assume $s < t$ lexicographically. Let $\sigma$
be the longest initial string that $s$ and $t$ agree on --- their closest
common ancestor. Since $s < t$, we must have that $s = \sigma \cdot 1 \cdot u$
and $t = \sigma \cdot 2 \cdot v$ for some $u$ and $v$ of the same length.

As we saw in the previous proof, since $G_{\sigma}$ is red, we immediately
get that $(X(s),X(t)), (X(s),Y(t)), (Y(s),X(t)), (Y(s),Y(t))$ are all red.

By considering all possible $s,t$ on level $k-1$, this argument says that
all edges among these values are red, so we have reached our goal.
\end{proof}

Along the same lines as Section~\ref{se:bounds}, we may define the recurrence
\[
T_{E(2,r,k)} = 2, \hbox{ and}
\]
\[
T_{k} = 2 GW(T_{k+1}, r^{2^{k}}),
\]
to get an upper bound of
\[
n(r,k) \le T_{0} = 2 GW(T_1, r).
\]

\subsection{Additional results}

Theorem~\ref{th:hilbert} immediately gives several nice consequences.

By considering subsets of Hilbert cubes, it is easy to see that, for
large $n$, any edge-coloring of the complete graph on $[n]$ will always
have solutions to equations of the form
$x_1 + \ldots + x_\ell = y_1 + \ldots + y_\ell$
which induce monochromatic subgraphs.

Combining Theorem~\ref{th:hilbert} with Szemer\'edi's celebrated
theorem on arithmetic progressions \cite{szemeredi}, we get the
following nice corollary.

\begin{corollary}
For any $\delta > 0$, and naturals $r,k$, there is a number
$n = n(r,\delta)$ so that for any set $A \subseteq \N$ of upper
density $\delta$, and any $r$-coloring of the edges of the complete
graph on $A$, there is a Hilbert cube $H = H(a; d_1, \ldots, d_k)$
contained in $A$ so that all edges within $H$ are monochromatic.
\end{corollary}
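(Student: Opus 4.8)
The plan is to combine the finitary content of Theorem~\ref{th:hilbert} with Szemer\'edi's theorem \cite{szemeredi}, exploiting the fact that both the Hilbert-cube structure and the monochromaticity of its induced subgraph are preserved under affine maps of the integers. Szemer\'edi's theorem supplies a long arithmetic progression inside $A$, and such a progression is precisely an affine copy of an interval $[m]$, on which Theorem~\ref{th:hilbert} already applies.

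First I would fix the two relevant thresholds. Let $m = n(r,k)$ be the number furnished by Theorem~\ref{th:hilbert}, so that every $r$-edge-coloring of the complete graph on $[m]$ contains a monochromatic $k$-dimensional Hilbert cube with $2^k$ distinct vertices. Let $N = N(\delta/2, m)$ be the threshold from Szemer\'edi's theorem, guaranteeing that every subset of $[N']$ of size at least $(\delta/2) N'$ contains an arithmetic progression of length $m$, for all $N' \ge N$. Next I would use the density hypothesis to locate a good finite window: since the upper density of $A$ equals $\delta$, there are infinitely many $n$ with $|A \cap [n]| \ge (\delta/2) n$, and I choose one with $n \ge N$. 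Applying Szemer\'edi's theorem to $A \cap [n]$ inside $[n]$ then yields an arithmetic progression $P = \{a_0 + i d \mid 0 \le i < m\} \subseteq A$ with common difference $d > 0$.

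Then I would transport the coloring back to $[m]$. The affine map $\iota : i \mapsto a_0 + i d$ is a bijection from $\{0, \ldots, m-1\}$ onto $P$, so pulling $\chi$ back along $\iota$ gives an $r$-edge-coloring $\chi'$ of the complete graph on $m$ vertices. By Theorem~\ref{th:hilbert}, $\chi'$ admits a monochromatic Hilbert cube $H(a'; d_1', \ldots, d_k')$ with distinct vertices. Pushing this forward through $\iota$ is the crux of the argument: because $\iota$ is affine, $\iota\bigl(H(a'; d_1', \ldots, d_k')\bigr) = H(a_0 + d a';\, d d_1', \ldots, d d_k')$ is again a $k$-dimensional Hilbert cube, its $2^k$ vertices remain distinct, and each of its edges inherits exactly the color it had under $\chi'$, hence they are all the same color. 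Since this cube lies in $P \subseteq A$, it is the desired monochromatic Hilbert cube inside $A$.

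The only genuinely delicate points are bookkeeping ones: reconciling the density threshold required by Szemer\'edi's theorem with the $\limsup$ definition of upper density (handled by passing to infinitely many windows and spending a factor of $\delta/2$ to absorb the fluctuation), and verifying that the affine image of a Hilbert cube is a Hilbert cube of the same dimension with distinct vertices. Neither is a real obstacle; the substance of the corollary is carried entirely by Theorem~\ref{th:hilbert}, with Szemer\'edi's theorem serving only to embed an affine copy of $[m]$ into $A$.
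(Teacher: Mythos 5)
Your proposal is correct and is exactly the argument the paper intends: the paper offers no proof beyond the remark that the corollary follows by combining Theorem~\ref{th:hilbert} with Szemer\'edi's theorem, and your write-up (find a length-$n(r,k)$ arithmetic progression in $A$, pull the coloring back along the affine bijection, apply the main theorem, and note that affine maps send Hilbert cubes to Hilbert cubes with colors and distinctness preserved) is the standard way to fill in that one-line sketch. No issues.
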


On the other hand, our theorem also inspires another negative result.
A Hilbert cube of dimension 2 is simply a set satisfying $w-x=y-z$.
We consider a similar equation, $a(w-x)=b(y-z)$, for $a \neq b$ fixed.
To avoid this equation, color pairs based on their difference.
Write $|w-x| = \left( \frac{b}{a} \right)^k p$ for $k$ as large as
possible, and color $\{w,x\}$ by the parity of $k$. Since $(w-x)$ and
$(y-z)$ will always be different by a factor of $\frac{b}{a}$, this
will assure the edges $\{w,x\}$ and $\{y,z\}$ have different colors.

\section{Acknowledgment}
The author would like to thank Ron Graham for his guidance, which
helped to strengthen the main theorem.

\end{document}